\providecommand{\tabularnewline}{\\}
\numberwithin{equation}{section}
\numberwithin{figure}{section}
\theoremstyle{plain}
\newtheorem{thm}{Theorem}
  \theoremstyle{plain}
  \newtheorem{prop}[thm]{Proposition}
  \theoremstyle{plain}
\theoremstyle{plain}
\numberwithin{equation}{section}
\begin{document}

\title{The full-tails gamma distribution applied to model extreme values}

\author{Joan del Castillo, Jalila Daoudi and Isabel Serra}
\begin{abstract}
In this article we show the relationship between the Pareto distribution
and the gamma distribution. This shows that the second one, appropriately
extended, explains some anomalies that arise in the practical use
of extreme value theory. The results are useful to certain phenomena
that are fitted by the Pareto distribution but, at the same time,
they present a deviation from this law for very large values. Two
examples of data analysis with the new model are provided. The first
one is on the influence of climate variability on the occurrence of
tropical cyclones. The second one on the analysis of aggregate loss
distributions associated to operational risk management.
\end{abstract}
\maketitle

\subsubsection*{\textbf{\emph{Keywords}}: Exponential models. Heavy tailed distributions.
Pareto distribution. Power-law distribution. Type III distribution.
Operational risk models.}

\section{Introduction}

The extreme value theory is used by many authors to model exceedances
in several fields such as hydrology, insurance, finance and environmental
science, see Furlan (2010), Coles and Sparks (2006), Moscadelli (2004).
However, the theory shows some surprises in practical applications.
For instance, Dutta and Perry (2006) observed, in an empirical analysis
of models for estimating operational risk, that even when Pareto distribution
fit the data it may result in unrealistic capital estimates (sometimes
more than 100\% of the asset size), see also Degen, \textit{et al}.
(2007). In other instances despite being well-founded the power law-distribution,
as in Corral, \textit{et al}. (2010), it may happen that it works
in the central region but not for larger values. These challenges
should motivate us to find new models that describe the characteristics
of the data rather than limit the data so that it matches the characteristics
of the model (Dutta and Perry, 2006).

The peaks over threshold (PoT) method for estimating high quantiles
is based on the Pickands-Balkema-DeHaan Theorem, see McNeil, \textit{et
al}. (2005) and Embrechts, \textit{et al}. (1997). Hence, in practice,
the conditional distribution of any random variable over a high threshold
is approximated by a generalized Pareto distribution (GPD). This result
is a mathematical solution to the question, but the practical problem
whether the threshold is high enough still remains. 

In this paper a new statistical approach for estimating high quantiles
is provided for non-light tails data sets. It is shown that the Pareto
distribution is nested in the statistical model here called \emph{full-tails
gamma} (FTG) distribution. FTG model is a scale parameter family of
distributions on $\left(0,\infty\right)$ closed by truncation, Hence,
it allows us to find distributions so close to the Pareto distribution
as determined by the data, but with greater flexibility, extending
the distributions for non-light tails provided by GPD. With the current
specialized computer programs for statistical analysis is not difficult
to deal with the FTG distribution, since the incomplete gamma function
and its derivatives are now easily available, see Abramowitz and Stegun
(1972). The work of pioneers like Chapman (1956) must be viewed in
this way. Another approach for lighter tails is in Akinsete, \textit{et
al}. (2008). 

The FTG distribution is related to very old families of distributions
as the Pareto III distribution, see Arnold (1983, pp 3) and Davis
\emph{et al} (1979). The gamma distribution is one of the most studied
families of distributions, since Fisher (1922). For life theory and
reliability the two-parameter right truncated gamma distribution is
usually considered since Chapman (1956). Den Broeder (1955) considered
the left truncated gamma distribution but with known scale parameter.
Stacy (1962) introduced a three-parameter generalized gamma distribution
which includes, as special cases, the two-parameter gamma and the
two-parameter Weibull. Harter (1967) extends the model to a four-parameter
family, by including a location parameter. Hedge and Dahiya (1989)
obtain necessary and sufficient conditions for the existence of the
MLE of the parameters of a right truncated gamma distribution. The
right truncated gamma with unknown origin is studied by Dixit and
Phal (2005). For simulation of right and left truncated gamma distributions,
see Philippe (1997). Also in physics literature the FTG distribution
appears related to the power-law with (exponential) cut off, see Clauset
et al (2009) or Sornette (2006), however, the models are not the same.

In Section 2, FGT distribution is introduced, showing that the domain
of parameters includes the gamma distribution and the Pareto distribution
(Theorem 1) in the boundary. Proposition 2 provides a clear interpretation
of its three parameters $\left(\alpha,\theta,\rho\right)$. The FTG
distribution for $\alpha>0$ is the left truncated gamma distribution
relocated to the origin . The FTG distribution for $\alpha\leq0$
appears as the full exponential model generated from a canonical statistic.
Section 3 describes the most basic statistical properties of the FTG,
as the moments generating function, a simulation method and the standard
tools for MLE. 

In Section 4, we provide applications of the FTG exemplify that are
usually fitted by Pareto distribution. The first one on the influence
of climate variability and global warming on the occurrence of tropical
cyclones, see Corral, \textit{et al}. (2010). Here, classical goodness
of fit test rejects Pareto distribution but it offers no alternative
to that model. The alternative is here provided by the FTG distribution.

The second example deals with the analysis of aggregate loss distributions
associated to operational risk management, see Degen, \textit{et al}.
(2007). The concept of operational risk is founded in the Basel II
accord of 1999, that has been widely adopted around the world as a
regulatory requirement by central banks. The focus on systemic risk,
precipitated by the current crisis, has elevated operational risk
management to greater prominence. Risk capital, under the PoT approach,
has been calculated here with Pareto and FTG distributions. Table
3 shows that Pareto distribution provides unrealistic and highly unstable
estimations, however, FTG distribution provides more realistic and
much more stable risk capital estimations.

\section{The full-tails gamma distribution}

The\emph{ FTG }distribution is the three-parameter family of continuous
probability distributions, with support on $(0,\infty)$, defined
by $\alpha\in\mathbb{R},\theta>0,\rho>0$ by

\begin{equation}
f\left(x;\alpha,\theta,\rho\right)=\theta\left(\rho+\theta x\right)^{\alpha-1}\exp(-(\rho+\theta x))/\Gamma(\alpha,\rho).\label{FTG}\end{equation}
where $\Gamma\left(\alpha,\rho\right)$ is the (upper) incomplete
gamma function, see Abramowitz and Stegun (1972), \begin{equation}
\Gamma\left(\alpha,\rho\right)=\int_{\rho}^{\infty}t^{\alpha-1}e^{-t}dt,\label{ing}\end{equation}
in particular $\Gamma\left(\alpha,0\right)=\Gamma\left(\alpha\right)$
is the gamma function. The FTG distribution extends to some boundary
parameters as shall seen bellow.

If $\alpha>0,\theta>0$ and $\rho=0$, the family\ $\left(\ref{FTG}\right)$
clearly extends to the probability density function of the gamma distribution,
defined by\begin{equation}
g\left(x;\alpha,\theta\right)=\theta^{\alpha}x^{\alpha-1}\exp\left(-\theta x\right)/\Gamma\left(\alpha\right)\label{gam}\end{equation}

For $\alpha>0$ the FTG\  is the left truncated gamma distribution
relocated to the origin, or equivalently it is a tail of gamma distribution.
Note that in this paper,  \emph{tail} is used in sense of conditional
exceedances over a threshold.  More exactly, suppose $F(x)$ is a
absolutely continuous\ cumulative distribution function of a non
negative random variable, $X$. The probability density function of
the exceedances of $X$ at $u>0$ is defined by\begin{equation}
f_{u}\left(x\right)=f\left(x+u\right)/(1-F(u))\label{thr}\end{equation}
where $f\left(x\right)=F^{\prime}\left(x\right)$. If $\alpha>0$
and $\rho>0$, then $\left(\ref{FTG}\right)$ is the probability density
function of the exceedances of a gamma distribution at $\sigma>0$,
with $\sigma=\rho/\theta$ \begin{equation}
g_{\sigma}\left(x;\alpha,\theta\right)=\theta^{\alpha}\left(x+\sigma\right)^{\alpha-1}\exp\left(-\theta\left(x+\sigma\right)\right)/\Gamma\left(\alpha,\sigma\theta\right).\label{trg}\end{equation}

The probability density function of the Pareto distribution, defined
for $x>0$, is\begin{equation}
p\left(x;\alpha,\sigma\right)=-\alpha\sigma^{-1}\left(1+x/\sigma\right)^{\alpha-1},\label{par}\end{equation}
where $\alpha<0$ and $\sigma>0$.  This parameterization will be
used to show that the Pareto distribution appears in this boundary
of the FTG distribution.

\begin{thm}
\label{pro:3}Let $\sigma=\rho/\theta>0$ fixed in $\left(\ref{FTG}\right)$
and $\alpha<0$. If $\rho$ tends to zero, then the probability density
function $\left(\ref{FTG}\right)$ tends to the probability density
function of the Pareto distribution\emph{ }(\ref{par}) in $L^{1}$
norm. Moreover, the convergence extends to the moments, provided the
corresponding moments for Pareto distribution are finite. \end{thm}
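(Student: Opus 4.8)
The plan is to reduce the statement to an elementary asymptotic for the incomplete gamma function near the origin. The first step is to eliminate $\theta$ in favour of the fixed quantity $\sigma$: since $\theta=\rho/\sigma$ one has $\rho+\theta x=\rho\,(1+x/\sigma)$, and substituting this into $(\ref{FTG})$ gathers the entire $\rho$-dependence into a single scalar factor,
\begin{equation}
f\!\left(x;\alpha,\rho/\sigma,\rho\right)=\frac{\rho^{\alpha}}{\Gamma(\alpha,\rho)}\,\cdot\,\frac{1}{\sigma}\left(1+\frac{x}{\sigma}\right)^{\alpha-1}\exp\!\left(-\rho\left(1+\frac{x}{\sigma}\right)\right),\qquad x>0.\label{eq:reparam}
\end{equation}
For each fixed $x$ the exponential tends to $1$ as $\rho\to0$, so the whole problem is to understand the behaviour of $\rho^{\alpha}/\Gamma(\alpha,\rho)$.

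The key, and only genuinely delicate, step is to prove that $\rho^{\alpha}/\Gamma(\alpha,\rho)\to-\alpha$ as $\rho\to0^{+}$ when $\alpha<0$. Because $\alpha<0$, both $\Gamma(\alpha,\rho)=\int_{\rho}^{\infty}t^{\alpha-1}e^{-t}\,dt$ and $\rho^{\alpha}$ diverge as $\rho\downarrow0$, so $\Gamma(\alpha,\rho)/\rho^{\alpha}$ is an $\infty/\infty$ indeterminate form and l'H\^{o}pital's rule applies; differentiating numerator and denominator in $\rho$ gives $(-\rho^{\alpha-1}e^{-\rho})/(\alpha\rho^{\alpha-1})=-e^{-\rho}/\alpha\to-1/\alpha$. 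Hence $\Gamma(\alpha,\rho)\sim-\rho^{\alpha}/\alpha$, so $\rho^{\alpha}/\Gamma(\alpha,\rho)\to-\alpha$, and $(\ref{eq:reparam})$ converges pointwise on $(0,\infty)$ to $-\alpha\sigma^{-1}(1+x/\sigma)^{\alpha-1}=p(x;\alpha,\sigma)$.

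For the $L^{1}$ statement I would pass from pointwise to norm convergence by domination. The map $\rho\mapsto\rho^{\alpha}/\Gamma(\alpha,\rho)$ is continuous on $(0,\infty)$ and has the finite limit $-\alpha$ at $0$, hence is bounded by some constant $M$ on an interval $(0,\rho_{0}]$; together with $\exp(-\rho(1+x/\sigma))\le1$ this gives the uniform bound $f(x;\alpha,\rho/\sigma,\rho)\le M\sigma^{-1}(1+x/\sigma)^{\alpha-1}$ for all $x>0$ and all $\rho\in(0,\rho_{0}]$, and the right-hand side is integrable. Since $|f-p|\le f+p$ is then dominated by a fixed integrable function and $f\to p$ pointwise, the dominated convergence theorem yields $\int_{0}^{\infty}|f(x;\alpha,\rho/\sigma,\rho)-p(x;\alpha,\sigma)|\,dx\to0$; alternatively, Scheff\'e's lemma applies directly because $f$ and $p$ are both probability densities.

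For the moments, the same bound gives $x^{k}f(x;\alpha,\rho/\sigma,\rho)\le M(-\alpha)^{-1}x^{k}p(x;\alpha,\sigma)$ for $\rho\in(0,\rho_{0}]$, and the right-hand side is integrable precisely when the $k$-th moment of the Pareto density $(\ref{par})$ is finite; dominated convergence then gives convergence of the $k$-th moments, and likewise for any order (integer or not) for which the corresponding Pareto moment exists. The main obstacle is thus confined to the second paragraph, namely the small-$\rho$ asymptotics of $\Gamma(\alpha,\rho)$; once that limit is established, the $L^{1}$ convergence and the convergence of moments are routine applications of dominated convergence with a single Pareto-type majorant.
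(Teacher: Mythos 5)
Your proposal is correct and follows essentially the same route as the paper: reparametrize via $\theta=\rho/\sigma$, establish $\rho^{\alpha}/\Gamma(\alpha,\rho)\to-\alpha$, obtain pointwise convergence, and dominate by a constant multiple of the Pareto density to get $L^{1}$ convergence and convergence of moments. The only difference is that you derive the key limit by l'H\^{o}pital's rule, whereas the paper cites it from Abramowitz and Stegun; your version is self-contained but otherwise identical in structure.
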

\begin{proof}
Observe that if $\rho$ tends to $0$, then $\theta$ tends to $0$,
since $\sigma=\rho/\theta>0$ is fixed. Using $\theta=\rho/\sigma$,
\[
f(x;\alpha,\theta,\rho)=\rho^{\alpha}\sigma^{-1}(1+x/\sigma)^{\alpha-1}\exp(-\rho(1+x/\sigma))/\Gamma(\alpha,\rho)\]
converges pointwise to the probability density function $\left(\ref{par}\right)$,
since the property (5.1.23) of Abramowitz and Stegun (1972), under
the assumptions,\begin{equation}
\lim_{\rho\to0}\rho^{\alpha}/\Gamma(\alpha,\rho)=-\alpha\label{lim}\end{equation}
holds. Observe that for $\rho$ small\begin{eqnarray*}
f(x;\alpha,\theta,\rho) & = & \rho\sigma^{-1}(1+x/\sigma)^{\alpha-1}\exp(-\rho(1+x/\sigma))/\Gamma(\alpha,\rho)\\
 & \leq & -2\alpha\sigma^{-1}(1+x/\sigma)^{\alpha-1}=2p\left(x;\alpha,\sigma\right)\end{eqnarray*}
since from the limit $\left(\ref{lim}\right)$ we can consider the
boundedness $\rho^{\alpha}/\Gamma(\alpha,\rho)\leq-2\alpha$. Finally,
from the dominated convergence theorem we obtain the convergence in
$L^{1}$. Moreover, whenever the moments of Pareto distribution are
finite, the convergence extends to these moments.
\end{proof}
Therefore, the family $\left(\ref{FTG}\right)$ has the boundary
parameter sets corresponding to the gamma distribution:$\left\{ \alpha>0,\theta>0,\rho=0\right\} $
and the Pareto distribution:$\left\{ \alpha<0,\theta=0,\sigma>0\right\} $.
\textcolor{blue}{} Summarizing, the FTG distribution\textcolor{red}{$\left(\ref{FTG}\right)$}
includes the gamma distribution, the truncated gamma distribution
$\left(\alpha>0\right)$, its extension to  $\alpha{\color{blue}\leq}0$
and the Pareto distribution, see Figure~\ref{graf_densities}. 

\begin{figure}[h]
\centering{}\centerline{\includegraphics[bb=0bp 302bp 748bp 595bp,clip,width=13cm]{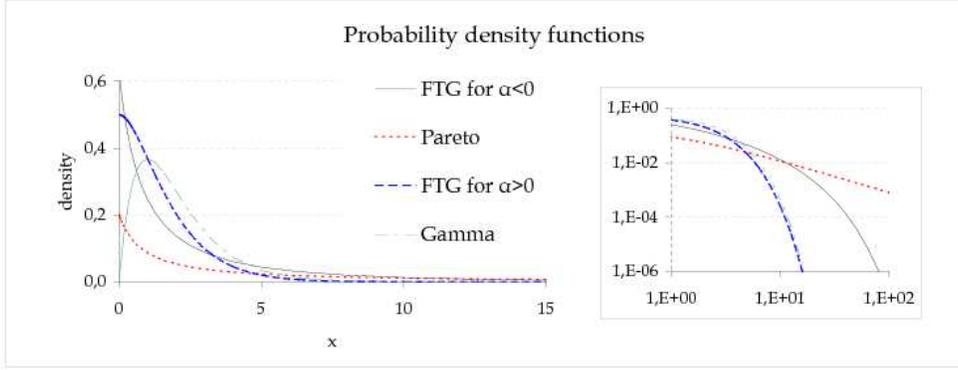}}
\caption{The left shows some probability density functions in FTG family. The
FTG with $\alpha=2$, $\sigma=1$ and $\theta=1$ corresponds to the
tails of a gamma and the FTG for $\alpha=-0.2$, $\sigma=1$ and $\theta=0.1$
is not. For the boundary parameter sets of the family, we consider
gamma with $\alpha=2$ and $\theta=1$ and Pareto with $\alpha=-0.2$
and $\sigma=1$. The right shows the same plot in common logarithm
for the tail of the functions. We see exponential decay except for
Pareto probability density function. \label{graf_densities}}
\end{figure}

\begin{prop}
\label{proptrsc}Let $X$ be a random variable distributed as $FTG(\alpha,\theta,\rho)$,
then
\end{prop}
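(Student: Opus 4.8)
The plan is to prove every assertion by one elementary device: the change of variable $s=\rho+\theta x$, which turns the unnormalized FTG integrand $\theta(\rho+\theta x)^{\alpha-1}\exp(-(\rho+\theta x))$ into $s^{\alpha-1}e^{-s}$ and, by the definition (\ref{ing}), identifies the normalizing constant $\Gamma(\alpha,\rho)$ as exactly $\int_{\rho}^{\infty}s^{\alpha-1}e^{-s}\,ds$. Everything then reduces to recognizing the resulting density as a member of a known family and reading off its new parameters.

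First I would treat the affine transformation $W=\rho+\theta X$. Writing $X=(W-\rho)/\theta$ with Jacobian $1/\theta$ and substituting into (\ref{FTG}), the factors of $\theta$ cancel and one obtains the density $w^{\alpha-1}e^{-w}/\Gamma(\alpha,\rho)$ on $(\rho,\infty)$, which is the standard unit-rate gamma density of shape $\alpha$ conditioned to exceed $\rho$ (for $\alpha>0$ a genuine truncated gamma, for $\alpha\le 0$ still a legitimate density on $(\rho,\infty)$ but not the truncation of a proper gamma). Together with the scaling computation below, this yields the announced interpretation of the three parameters: $\alpha$ is the gamma shape, $\sigma=\rho/\theta$ is a scale, and $\rho$ is the standardized relocation/truncation point; for $\alpha>0$ one recovers, via (\ref{trg}), that $X$ is distributed as the exceedances over $\sigma$ of a $\mathrm{Gamma}(\alpha,\theta)$ law.

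Second, for closure under truncation I would invoke (\ref{thr}). The same substitution gives $1-F(u)=\Gamma(\alpha,\rho+\theta u)/\Gamma(\alpha,\rho)$, and after rewriting $\rho+\theta(x+u)=(\rho+\theta u)+\theta x$ the quotient $f(x+u)/(1-F(u))$ collapses to $\theta\bigl((\rho+\theta u)+\theta x\bigr)^{\alpha-1}\exp\bigl(-((\rho+\theta u)+\theta x)\bigr)/\Gamma(\alpha,\rho+\theta u)$, i.e. exactly $f(x;\alpha,\theta,\rho+\theta u)$. Hence the exceedances of an FTG over any $u>0$ are again FTG, with $\alpha$ and $\theta$ unchanged and $\rho$ shifted to $\rho+\theta u$ (equivalently $\sigma$ shifted to $\sigma+u$). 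For the scale statement, for $c>0$ set $Y=cX$; then $f_Y(y)=c^{-1}f(y/c;\alpha,\theta,\rho)$, and collecting terms gives $f_Y(y)=f(y;\alpha,\theta/c,\rho)$, so $cX\sim FTG(\alpha,\theta/c,\rho)$, i.e. in the $(\alpha,\sigma)$ coordinates $cX$ is FTG with scale $c\sigma$, confirming that $\sigma$ is a bona fide scale parameter.

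None of these computations is genuinely hard; the only points requiring care are the bookkeeping of normalizing constants through each change of variables, so that the output is recognized as the same family with shifted $\rho$, and the case distinction in $\alpha$, since for $\alpha\le 0$ the function $w^{\alpha-1}e^{-w}$ is non-integrable at the origin — precisely why the truncation at $\rho>0$ is indispensable and why the gamma interpretation must be phrased as a truncated (relocated) gamma rather than a proper one. The limiting behaviour as $\rho\to 0$ is already handled by Theorem \ref{pro:3}, so no additional argument is needed there.
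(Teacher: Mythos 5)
Your proposal is correct and follows essentially the same route as the paper: both parts are verified by direct manipulation of the density, computing $c^{-1}f(y/c;\alpha,\theta,\rho)=f(y;\alpha,\theta/c,\rho)$ for scaling and $f(x+u)/(1-F(u))=f(x;\alpha,\theta,\rho+\theta u)$ for truncation via (\ref{thr}); your substitution $s=\rho+\theta x$ is just a tidy repackaging of the same computation. The only (minor) difference is that the paper additionally remarks on the boundary case $\theta=0$, where the same identities hold for the Pareto density (\ref{par}) with $\sigma\mapsto\lambda\sigma$, which your argument omits but which lies outside the literal hypothesis $X\sim FTG(\alpha,\theta,\rho)$ with $\theta>0$.
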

(a) For $\lambda>0$, the random variable $\lambda X$ is distributed
as $FTG\left(\alpha,\theta/\lambda,\rho\right)$.

(b) For any threshold, $u>0$, the threshold exceedances, $X_{u}$
is distributed as $FTG(\alpha,\theta,\rho+\theta u).$
\begin{proof}
The first result holds from the probability density function of $\lambda X$
for $\lambda>0$, \[
f\left(x/\lambda;\alpha,\theta,\rho\right)/\lambda=(\theta/\lambda)\left(\rho+\theta x/\lambda\right)^{\alpha-1}\exp(-(\rho+\theta x/\lambda))/\Gamma(\alpha,\rho)=f(x;\alpha,\theta/\lambda,\rho)\]
remark that for $\theta=0$ is\[
p\left(x/\lambda;\alpha,\sigma\right)/\lambda=-\alpha(\lambda\sigma)^{-1}\left(1+x/(\lambda\sigma)\right)^{\alpha-1}=p(x;\alpha,\lambda\sigma).\]
And the second one is a consequence of $\left(\ref{thr}\right).$
For $\theta>0$ is\[
\frac{f\left(x+u;\alpha,\theta,\rho\right)}{1-F(u)}=\frac{\theta\left(\rho+\theta u+\theta x\right)^{\alpha-1}}{\Gamma(\alpha,\rho+\theta u)}\exp(-(\rho+\theta u+\theta x))=f(x;\alpha,\theta,\rho+\theta u)\]
and for $\theta=0$
\end{proof}
From the last result it is clear that the FTG distribution $\left(\ref{FTG}\right)$
is a scale parameter family on $\left(0,\infty\right)$ closed by
truncation, in the sense of $\left(\ref{thr}\right)$. Hence it is
appropriate for modelling (non-light) tails of datasets in the sense
Balkema-DeHaan (1974) and Pickands (1975), since it contains the Pareto
distribution and the exponential distribution. The parameter $\beta=1/\theta$
is the scale parameter and the parameter $\rho$ is the truncation
parameter. The parameter $\left(-\alpha\right)$ shall be interpreted
in terms of the Pareto distribution as the weight of the tail. Then,
each one of the three-parameter separately has a clear interpretation. 

Given $\sigma$ fixed, the family $\left(\ref{FTG}\right)$ for $\alpha\leq0$
appears as the full exponential model generated from a canonical statistic
$\left(x,log\left(\sigma+x\right)\right)$, see Barndorff-Nielsen
(1978), Brown (1986), Letac (1992). 

The FTG distribution is related to the three-parameter model Pareto
type III from Arnold (1983), caracterized by the survivor function 

\begin{equation}
\bar{F}(t)=(1+t/\phi)^{-\lambda}\exp(-\theta t).\label{surp3}\end{equation}
Moreover, the Pareto type III has been considered as a model for survival
data, Davis (1979). This fact is natural since the Pareto type III
model is a mixture of two FTG distributions. In fact, Pareto type
III model is a particular case of the six-parameter mixture model
of two FTG models.

Finally, $\left(\ref{FTG}\right)$ can also be seen as a weighted
version of the Pareto distribution, with the weight $w(x)=\exp(-\theta x)$,
that is also known as an \emph{exponencial tilting} of the distribution,
see Barndorff-Nielsen and Cox (1994).\textcolor{red}{}

\section{statistical tools and MLE}

With the current specialized computer programs for statistical analysis
is not difficult to deal with the FTG distribution. The incomplete
gamma function, $\Gamma\left(\alpha,\rho\right)$ and its derivatives
are now easily available. Symbolic differentiation allows us to get
the moments of a distribution from the moment generating function.
Simulation and optimization algorithms are available in the same way.
The work of pioneers like Chapman (1956) must be viewed in this way.

The cumulative distribution function corresponding to the family $\left(\ref{FTG}\right)$
is\[
F\left(x;\alpha,\theta,\rho\right)=1-\Gamma(\alpha,\rho+\theta x)/\Gamma(\alpha,\rho)\]
and for the Pareto distribution we have to consider the limit case,
corresponding to $P\left(x;\alpha,\sigma\right)=1-(1+x/\sigma)^{\alpha}$.

The FTG distribution has moment-generating function in the interior
of the domain of parameters. Hence, it is possible to calculate the
moments of all orders. In addition it is also possible to calculate
the moments of the conditional distribution over a threshold, by Proposition
\ref{proptrsc}. For $\alpha\in\mathbb{R},\theta>0,\rho>0$, the moment-generating
function of the FTG distribution $\left(\ref{FTG}\right)$ exist and
it is given by \begin{equation}
M(t)=M(t;\alpha,\theta,\rho)=\left(1-t/\theta\right)^{-\alpha}\exp\left(-\rho t/\theta\right)\Gamma(\alpha,\rho(1-t/\theta))/\Gamma(\alpha,\rho),\quad t<\theta.\label{mgf-1}\end{equation}
For $\alpha>0,$ it extends for $\rho=0$ and\ coincides with the
moment generating function of gamma distribution $M_{g}\left(t\right)=\left(1-t/\theta\right)^{-\alpha}$

The cumulant generating function is given by\[
K\left(t\right)=\log\left(M(t)\right)=-t\rho/\theta-\alpha\log\left(1-t/\theta\right)-\log\Gamma(\alpha,\rho)+\log\Gamma\left(\alpha,\left(1-t/\theta\right)\rho\right)\]
hence, the first moments are\begin{eqnarray*}
E\left[X\right] & = & K^{\prime}\left(0\right)=(\alpha-\rho+\mu)/\theta\\
Var\left[X\right] & = & K^{\prime\prime}\left(0\right)=(\alpha+(1+\rho-\alpha)\mu-\mu^{2})/\theta^{2}\end{eqnarray*}
where $\mu=e^{-\rho}\rho^{\alpha}/\Gamma(\alpha,\rho)$. Notice that
using the Proposition \ref{proptrsc}, to calculate the conditional
expectation for any threshold fixed $u>0$, is the same as to calculate
the expectation with modified parameters \begin{eqnarray}
E\left[X\:|\: X>u\right] & =(\alpha-\rho+\mu')/\theta\label{eq:cond}\end{eqnarray}
where $\mu'=e^{-(\rho+\theta u)}(\rho+\theta u)^{\alpha}/\Gamma(\alpha,\rho+\theta u)$
.

\subsection{Random variates generation}

Simulation methods for Pareto and gamma distributions are well known.
Has also been well studied the simulation of truncated gamma distribution
(\ref{trg}), see Philippe (1997). Hence, only the set of parameters
$\{\alpha<0,\theta>0,\rho>0\}$ for FTG distribution is considered
here.

A simple way to simulate the distribution is the inversion method,
since the cumulative distribution function has an easy expression,
however, it needs to use complex numerical processes using the incomplete
gamma function.

A simple and efficient method from numerical point of view is obtained
with an idea from Devroye (1986) on a generalization of the rejection
method. We emphasize the simplicity of this algorithm, since it does
not require the use of the incomplete gamma function.

First of all, since $1/\theta$ is a scale parameter it is enough
consider simulations for $\theta=\rho$. That is, to simulate $FTG\left(\alpha,\theta,\rho\right)$,
we can first simulate $FTG\left(\alpha,\rho,\rho\right)$ and finally
we apply the change of scale to the random sample. 

For $\theta=\rho$, the probability density function $\left(\ref{FTG}\right)$
split in three terms \begin{equation}
f\left(x;\alpha,\rho,\rho\right)=\left(\rho^{\alpha-1}e^{-\rho}/\Gamma(\alpha,\rho)\right)(\rho e^{-\rho x})\left(1+x\right)^{\alpha-1}=cg(x)\psi(x)\label{ftgSep}\end{equation}
where the function $\psi\left(x\right)=\left(1+x\right)^{\alpha-1}$
is $[0,1]$-valued, $g\left(x\right)=\rho e^{-\rho x}$ is a probability
density function easy to simulate and $c$ is a normalization constant
at least equal to 1. 

The rejection algorithm for this case can be rewritten as follows.
Generate independent random variates $\left(X,U\right)$ where $X$
has probability density function $g\left(x\right)$ and $U$ is uniformly
distributed in $[0,1]$ until $U\leq\psi(X)$. This method produces
a random variable $X$ with probability density function $f(x)$,
(Devroye, 1986).

The following code applies the method to our case, see R Development
Core Team (2010).

\begin{verbatim}
     #to generate a sample of size n of FTG(a,t,r)
     rFTG<-function(n,a,t,r) { 
          sample<-c(); m<-0 
          while (m<n) { 
               x<-rexp(1,rate=r);u<-runif(1) 
               if (u<=(1+x)^(a-1)) sample[m+1]<-x 
               m<-length(sample) } 
          sample*r/t } 
\end{verbatim}

\subsection{Maximum likelihood estimates of the parameters}

In (\ref{FTG}), FTG distribution has been introduced with parameters
$(\alpha,\theta,\rho)$, since each one separately has a clear interpretation.
For MLE estimation it is better to use $(\alpha,\sigma,\rho)$, with
dispersion parameter $\sigma=\rho/\theta$, since fixed $\sigma$
the FTG distribution is an exponential model. Hence, from Barndorff-Nielsen
(1978), it is known that the maximum likelihood estimator exist and
it is unique. The summary of the procedure to compute the MLE is
search the dispersion parameter and then to optimize the problem for
others.

Let $\boldsymbol{\textrm{x}}=\{x_{1},...,x_{n}\}$ be a of size $n$,
the log-likelihood function for FTG distribution is

\begin{equation}
l(\alpha,\sigma,\rho)=-n\left(\log\Gamma\left(\alpha,\rho\right)+\log\left(\sigma\rho^{-\alpha}\right)-\frac{\alpha-1}{n}\sum_{i=1}^{n}\log\left(1+\frac{x_{i}}{\sigma}\right)+\frac{\rho}{n}\sum_{i=1}^{n}\left(1+\frac{x_{i}}{\sigma}\right)\right)\label{like-1}\end{equation}
To simplify, we denote\begin{equation}
d=d(\alpha,\rho)=\log\Gamma(\alpha,\rho)\label{defd-1}\end{equation}
and we consider $(r,s)$, for $\sigma$ fixed as

\[
r(x;\sigma)=\left(1+x/\sigma\right)\:\:\textrm{\textrm{and}\:\:}s(x;\sigma)=\log\left(1+x/\sigma\right)\]
which are the \textit{sufficient statistics} from exponential model
point of view and then we denote the sample means as\[
\bar{r}(\boldsymbol{\textrm{x}};\sigma)=\frac{1}{n}\sum_{i=1}^{n}\left(1+x_{i}/\sigma\right)\:\:\textrm{\textrm{and}}\:\:\bar{s}(\boldsymbol{\textrm{x}};\sigma)=\frac{1}{n}\sum_{i=1}^{n}\log\left(1+x_{i}/\sigma\right).\]
To simplify, we use the parameters in subscript to denote the partials
derivatives and we omit the dependence of the parameters in these
derivatives. Hence, the scoring is $\left(l_{\alpha},l_{\sigma},l_{\rho}\right)$
and it is given by \begin{eqnarray}
l_{\alpha} & = & -n\left\{ d_{\alpha}-\log\left(\rho\right)-\bar{s}(\boldsymbol{\textrm{x}};\sigma)\right\} \label{logeq:1}\\
l_{\sigma} & = & -n\left\{ \sigma^{-1}-(\alpha-1)\bar{s}_{\sigma}+\rho\bar{r}_{\sigma}\right\} \label{logeq:2}\\
l_{\rho} & = & -n\left\{ d_{\rho}-\alpha\rho^{-1}+\bar{r}(\boldsymbol{\textrm{x}};\sigma)\right\} \label{logeq:3}\end{eqnarray}
the observed information matrix is given by

\[
I_{O}(\alpha,\sigma,\rho)=-n\left(\begin{array}{ccc}
d_{\alpha\alpha} & -\bar{s}{}_{\sigma} & d_{\alpha\rho}-\rho^{-1}\\
\\-\bar{s}_{\sigma} & -\sigma^{-2}-(\alpha-1)\bar{s}{}_{\sigma\sigma}+\rho\bar{r}_{\sigma\sigma} & \bar{r}_{\sigma}\\
\\d_{\alpha\text{\ensuremath{\rho}}}-\rho^{-1} & \bar{r}_{\sigma} & d_{\rho\rho}+\alpha\rho^{-2}\end{array}\right)\]
and it can be used to compute the confidence interval for the maximum
likelihood estimates $\hat{\alpha},\:\hat{\sigma}\:\textrm{and\:}\hat{\rho}$
of the parameters $\alpha,\:\sigma\:\textrm{and\:}\rho,$ respectively.

To compute the MLE is convenient to solve the equation (\ref{logeq:2})
for to get $\hat{\sigma}$ using $(\hat{\alpha}(\sigma),\hat{\rho}(\sigma))$
for the parameters $(\alpha,\rho)$ or, more general, to maximize
the profile log-likelihood equation

\begin{equation}
l_{p}(\sigma)=-n\left(\:\log\Gamma\left(\hat{\alpha}(\sigma),\hat{\rho}(\sigma)\right)+\log\left(\sigma\hat{\rho}(\sigma)^{-\hat{\alpha}(\sigma)}\right)-(\hat{\alpha}(\sigma)-1)\bar{s}(\boldsymbol{\textrm{x}},\sigma)+\hat{\rho}(\sigma)\bar{r}(\textrm{\ensuremath{\boldsymbol{\textrm{x}}}},\sigma)\right)\label{like-prof}\end{equation}
where $(\hat{\alpha}(\sigma),\hat{\rho}(\sigma))$ is the only one
solution of the system in $(\alpha,\rho)$ consists of the equations
(\ref{logeq:1}) and (\ref{logeq:3}) for $\sigma$ fixed. Remark
that, from a practical point of view, is convenient to consider this
pair of equations to simplify the equation (\ref{like-prof}) (or
(\ref{logeq:2})) in an equation as light as possible of the sample
explicitly. For instance, the equation (\ref{like-prof}) can be
simplified by

\begin{equation}
l_{p}(\sigma)=-n\left(\:\log\Gamma\left(\hat{\alpha}(\sigma),\hat{\rho}(\sigma)\right)-\log\left(\hat{\rho}(\sigma)\sigma^{-1}\right)-(\hat{\alpha}(\sigma)-1)d_{\alpha}-\hat{\rho}(\sigma)d_{\rho}+\hat{\alpha}(\sigma))\right)\label{like-prof-1}\end{equation}
remark that is an expression without the sample explicitly.

A procedure to obtain the MLE in R is computing the MLE of the standardized
sample $\textrm{\ensuremath{\boldsymbol{\textrm{y}}}}=\{x_{i}/\bar{x}\}_{1\leq i\leq n}$,
considering the initial estimates as follows. We have two options:
to take the initial estimates as $(\dot{\alpha},1,\dot{\theta})$
where $(\dot{\alpha},\dot{\theta})$ is the MLE of gamma model or
take the initial estimates as $(\dot{\alpha},\dot{\sigma},\dot{\rho})$
where $(\dot{\alpha},\dot{\sigma})$ is the MLE of Pareto model and
$\dot{\rho}$ is obtained by the relation (from the equation (\ref{logeq:3}))\[
d_{\rho}-\dot{\alpha}\dot{\rho}^{-1}+1+\dot{\sigma}^{-1}=0.\]
Finally, $(\hat{\alpha},\hat{\sigma},\hat{\rho})$ (the MLE for the
sample $\boldsymbol{\textrm{x}}$) is obtained using the Proposition
\ref{proptrsc}, in fact we obtain $\hat{\alpha}=\hat{\alpha'},\:\hat{\sigma}=\hat{\sigma'}/\bar{x}\:\textrm{and\:}\hat{\rho}=\hat{\rho'}$.

 Finally, it might be appropriate to consider de log-scale for $\sigma$
and $\rho$. R has a package to optimize which greatly simplify the
calculation the MLE.

\section{Data Analysis}

Certain phenomena that may be fitted by Pareto distribution, or the
power-law distribution, present a deviation from these laws for very
large values. It is often due to the interference that produces an
overall limit (a finite ocean basin or a loss limited to the total
value of a economy). The motivation of this work was to find a model
to explain this fact in several cases, such as the energy of tropical
cyclones or the calculation of regulatory capital for operational
risk.\textcolor{red}{{} .}

In the first example, Choulakian and Stephens (2001) goodness of fit
test rejects the Pareto distribution, but no alternative is provided.
We show that FTG is a better model fitting even the very large values.
In the second example, goodness of fit test can not be applied, since
the parameter is outside the range of parameters provided by their
tables. However, FTG is a better model that Pareto distribution, providing
more realistic and much more stable risk capital estimations.

\subsection{\textcolor{blue}{Analysis of tropical cyclones}}

Corral, \textit{et al}. (2010) study the influence of climate variability
and global warming through the occurrence of tropical cyclones. Their
approach is based on the application of an estimation of released
energy to individual tropical cyclones. We are going to compare our
model with its statistical analysis on power-law distribution for
$494$ tropical cyclones occurred in the North Atlantic between 1966
and 2009. 

To measure the importance of the tropical cyclones it is used an estimation
of released energy, the power dissipation index (PDI), defined by
\[
PDI=\sum_{t}v_{t}^{3}\Delta t\]
where $t$ denotes time and runs over the entire lifetime of the storm
and $v_{t}$ is the maximum sustained surface wind velocity at time
$t$ (PDI units are $m^{3}/s^{2}$). The PDI of the original data
is between $5.38\:10^{8}$ and $2.54\:10^{11}$. Deviations from the
power law at small PDI values were attributed to the deliberate incompleteness
of the records for `no significant' storms. Their estimation only
considers tropical cyclones with PDI bigger than $3\:10^{9}$, that
is a sample of size $372$ ($75\%$ of the original data).

Figure~\ref{ex1_hist} shows the fit of the power-law distribution
with an empirical aproximation probability density function of the
sample. Given the sample of tropical cyclones $\{x_{i}\}$ for $1\leq i\leq n$
with $n=494$, Corral, \textit{et al}. (2010) approximate the probability
density function at points $p_{r}=10^{8+(r-1)/5}$ for $1<r<m$ where
$m=21$, for the histogram values \[
h_{r}=\frac{{\#\{\: x_{i}\::\: l_{r}<x_{i}\leq l_{r+1}\}}}{n\:(l_{r+1}-l_{r})}\]
 for the intervals given by $l_{s}=0.5\:10^{8+s/5}\:11^{1/5}$ with
$1<s<m+1$. The goal of their method is to plot in common logarithm
(base 10) scale for both axes, since the power-law probability density
function in this situation corresponds to a straight line. The fit
is done by minimum square method for a set of points $\{(u_{r},v_{r})\}$,
where $u_{r}=\log_{10}p_{r}$ and $v_{r}=\log_{10}h{}_{r}$. 

Our first contribution consists in fitting the FTG distribution by
MLE for the whole sample. The FTG distribution shows a really best
fit especially in the tail of the data, see Figure ~\ref{ex1_hist}.
The more rapid decay at large PDI is associated with the finite size
of the ocean basin. That is, the storms with the largest PDI do not
have enough room to last a longer time. The relevant thing is that
FTG distribution fits the date even in this situation.

\begin{figure}[th]
\centering{}\centerline{\includegraphics[bb=0bp 300bp 700bp 595bp,clip,width=13cm]{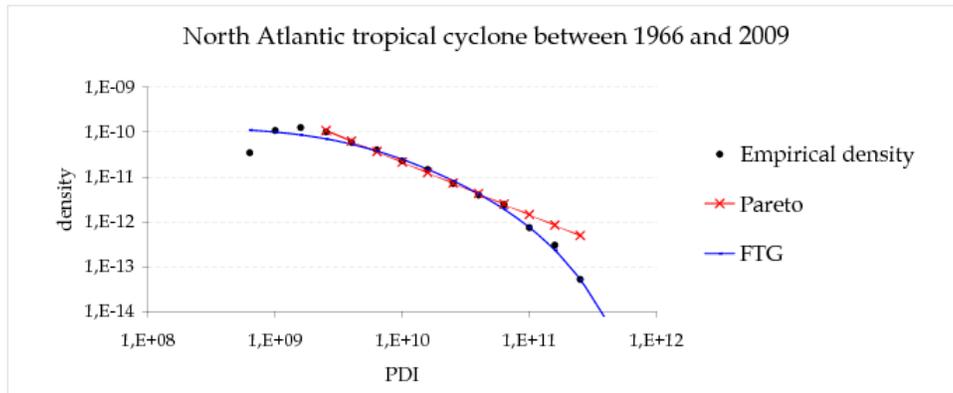}}
\caption{FTG distribution fits better than Pareto distribution the tropical
cyclones data set, especially in the tail of the observations. The
plot is scaled in common logarithm for both axes.\label{ex1_hist}}
\end{figure}

Theorem \ref{pro:3} shows that Pareto distribution is nested in FTG
distribution, hence likelihood inference is now available. MLE of
parameters and its standard deviations are shown in Table~\ref{ex1_ajusts}
for FTG distribution and Pareto distribution (with two parameters).
The values of log-likelihood function are $-667.58$ for FTG case
(truncated gamma distribution) and $-680.06$ for the Pareto case. 

First of all, the goodness of fit test for Pareto distribution given
by Choulakian and Stephens (2001) rejects with p-value less than 0.001
for both statistics, $W^{2}=0.28$ and $A^{2}=2.4$, of the method,
but it offers no alternative to the model. Finally, the likelihood
ratio test can be used to find a confidence region around the FTG
parameters, concluding that the difference between the FTG and the
Pareto distribution is highly significant. The $p$-value is $5.8\:10^{-7}$.

\begin{table}[h]
\begin{centering}
\begin{tabular}{c|ccc|cccc|l|}
\cline{2-9} 
 & \multicolumn{3}{c|}{Pareto distribution} & \multicolumn{4}{c|}{FTG distribution} & \tabularnewline
 & $\alpha$ & \multicolumn{1}{c|}{$\sigma$} & $l$ & $\alpha$  & $\sigma$  & \multicolumn{1}{c|}{$\rho$ } & $l$ & LRT\tabularnewline
\hline 
\multicolumn{1}{|c|}{MLE } & -1.63 & 2.01 & -680.06 & 0.28  & 0.09  & 0.02  & -667.58 & 24.96\tabularnewline
\multicolumn{1}{|c|}{s.e. } & 0.22 & 0.41 &  & 0.15  & 0.11  & 0.02  &  & \tabularnewline
\hline
\multicolumn{1}{c}{} &  &  & \multicolumn{1}{c}{} &  &  &  & \multicolumn{1}{c}{} & \multicolumn{1}{l}{}\tabularnewline
\end{tabular}
\par\end{centering}

\centering{}\caption{MLE for FTG and Pareto distributions for tropical cyclone occurred
in the North Atlantic between 1966 and 2009. The data used corresponds
to PDI over $3\:10^{9}$, with the origin shifted to zero (units are
$10^{10}m^{3}/s^{2}$). This change does not afect the likelihood
ratio test, LRT, and the $\alpha$ parameter.\label{ex1_ajusts}}
\end{table}

\subsection{Analysis of aggregate loss distributions}

Financial institutions use internal and external loss data in order
to compare several approaches for modelling \emph{aggregate loss distributions},
associated to quantitative modelling of operational risk, see Dutta
and Perry (2006), Degen, \textit{et al}. (2007) and Moscadelli (2004).
The data used for the analysis was collected by several banks participating
in the survey to provide individual gross operational losses above
a threshold, starting on 2002. The data was grouped by eight standardized
\emph{business lines} and seven \emph{event types}. 

\emph{Risk capital} is measured as the $99.9\%$ percentile level
of the simulated capital estimates for aggregate loss distributions
in holding period (1 year). A \emph{loss event} $L_{i}$ (also known
as the loss severity) is an incident for which an entity suffers damages
that can be measured with a monetary value. An aggregate loss over
a specified period of time can be expressed as the sum

\begin{equation}
S=\sum_{i=1}^{N}L_{i}\label{eq:ald}\end{equation}
where $N$ is a random variable that represents the frequency of losses
that occur over the period. As usual, here it is assumed that the
$L_{i}$ are independent and identically distributed, and each $L_{i}$
is independent from $N$, that is Poisson distributed, with parameter
$\lambda$.

The data set used here correspond to the $40$ largest losses associated
with the business line \emph{corporate finance} and the event type
\emph{external fraud}, observed over a high threshold, $u$. To maintain
confidentiality, the data $\left\{ x_{j}\right\} $ has been scaled
to threshold zero and mean $100$, according to\[
y_{j}=100\left(\frac{x_{j}-u}{\bar{x}-u}\right)\]
The $40$ exceedances, rounded to two decimal place, were: $0.07$,
$0.11$, $0.26$, $0.40$, $0.46$, $0.62$, $0.70$, $0.75$, $0.89$,
$1.08$, $1.52$, $1.64$, $1.69$, $2.04$, $2.19$, $2.52$, $2.73$,
$3.16$, $3.74$, $4.04$, $4.63$, $5.44$, $5.86$, $6.02$, $10.32$,
$19.63$, $29.13$, $30.36$, $30.88$, $35.78$, $40.07$, $46.12$,
$137.52$, $237.05$, $311.14$, $314.19$, $396.29$, $552.48$,
$864.88$, $891.62$.

Aggregate losses are determined mainly by the extreme values of loss
events distribution. In this case, risk capital depends on $40$ exceedances,
but, to calculate the $99.9\%$ quantile, a model is required. Under
the PoT approach extreme values are modelled with Pareto distribution,
see Degen, \textit{et al}. (2007) and Moscadelli (2004). Pickands-Balkema-DeHaan
theorem justifies the approach, see McNeil, \textit{et al}. (2005).
However, this approach may result in unrealistic capital estimates,
especially when the fitted Pareto distribution has infinite expectation.

Since the data set has only exceedances over a threshold, the PoT
method is the appropriate way. When all losses are recorded, Dutta
and Perry (2006) use a four-parameter distribution, called g-and-h,
to model the data. If we focus on extreme events of financial assets
returns, both upside and downside, standard methodologies also include
the classical Student's t and stable Paretian distributions, see Rachev,
\textit{et al}. (2010). 

\begin{table}[h]
\begin{centering}
\begin{tabular}{c|ccc|cccc|l|}
\cline{2-9} 
 & \multicolumn{3}{c|}{Pareto distribution} & \multicolumn{4}{c|}{FTG distribution} & \tabularnewline
 & $\alpha$ & \multicolumn{1}{c|}{$\sigma$} & $l$ & $\alpha$  & $\sigma$  & \multicolumn{1}{c|}{$\rho$ } & $l$ & LRT\tabularnewline
\hline 
\multicolumn{1}{|c|}{MLE } & -0,45 & 1,38 & -174,44 & -0,20 & 0.65 & 4.3E-4 & -172,37 & 4,14\tabularnewline
\multicolumn{1}{|c|}{s.e. } & 0.10 & 0.73 &  & 0.16 & 0.59 & 6.2E-4 &  & \tabularnewline
\hline
\multicolumn{1}{c}{} &  &  & \multicolumn{1}{c}{} &  &  &  & \multicolumn{1}{c}{} & \multicolumn{1}{l}{}\tabularnewline
\end{tabular}
\par\end{centering}

\centering{}\caption{MLE for FTG and Pareto distributions of losses by external fraud.
FTG distribution is a better model than Pareto distribution, from
likelihood ratio test. \label{ex2_ajusts}}
\end{table}

Table \ref{ex2_ajusts} gives the MLE of parameters for Pareto and
FTG distributions, as well as its standard deviations and log-likelihood
function, for the last data set. First of all we observe that for
Pareto distribution the parameter is in the range $0<\left(-\alpha\right)<1$,
that is, a distribution with infinite expectation. This can not be
rejected with the goodness of fit test for Pareto distribution given
by Choulakian and Stephens (2001), since the parameter is outside
the range of parameters provided by their tables. However, Pareto
distribution is nested in FTG distribution (Theorem \ref{pro:3})
and likelihood ratio test is $4.142$, with p-value $0.042$. Hence,
FTG distribution is a more likelihood model for the data set, since
Pareto distribution is outside of a $95\%$ confidence region for
FTG distribution parameters.

\begin{figure}[h]
\centering{}\centerline{\includegraphics[bb=0bp 302bp 748bp 595bp,clip,width=13cm]{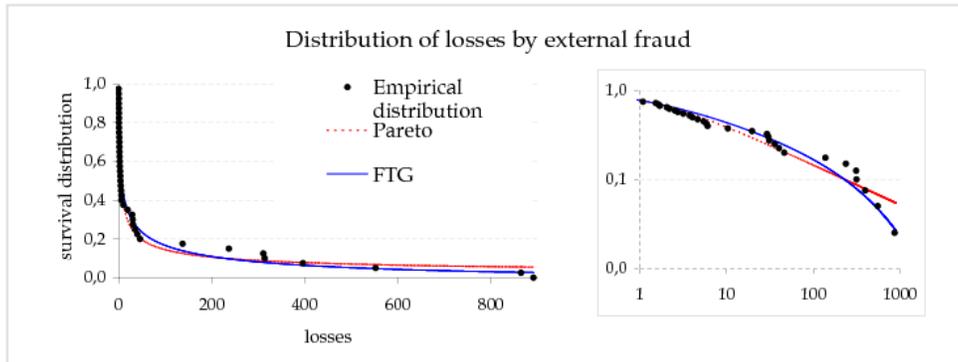}}
\caption{The FTG and the Pareto distributions fit the empirical survival function
in a similar way in the range of the observed sample. However, the
estimated high quantiles differ greatly.\label{ex2_graf}The left
shows survival distributiona and the picture in the right shows the
same plot in common logarithm for the tail of the functions. }
\end{figure}

Figure \ref{ex2_graf} shows the empirical survival (or reliability)
function and its fit given by Pareto and FTG distributions. The probability
to exceed the maximum of the sample is estimated at $5,52\%$ for
the Pareto distribution and $2.65\%$ for the FTG distribution, this
difference does not seem essential. However, the estimation of high
quantiles heavily depends on the model. The $0.999$ quantile is $6.95\times10^{6}$
for the Pareto distribution and $3.93\times10^{3}$ for the FTG distribution.
Moreover, the difference is even greater to calculate the expected
tail loss over this quantile, that is the expected value of a loss
if a tail event does occur; it is $12970.6$ for the FTG distribution,
since (\ref{eq:cond}), and infinite for the Pareto distribution.
Note that these quantities are measured in a monetary unit (as dollars)
to calculate risk capital, hence a factor of $10^{3}$ is really important.

\begin{table}[h]
\begin{centering}
\begin{tabular}{|c|cc|c|ccc|r|}
\hline 
 & \multicolumn{3}{c|}{Pareto distribution} & \multicolumn{4}{c|}{FTG distribution}\tabularnewline
sample & $\alpha$ & $\sigma$ & Risk capital & $\alpha$  & $\log\theta$  & $\log\rho$  & Risk capital\tabularnewline
\hline 
1 & -0,310 & 0,367 & 2,47E+13 & -0,038 & -7,093 & -9,250 & 10832,98\tabularnewline
2 & -0,373 & 1,122 & 3,50E+11 & 0,003 & -6,771 & -8,251 & 9292,72\tabularnewline
3 & -0,410 & 1,719 & 5,36E+10 & -0,106 & -7,341 & -7,792 & 13407,05\tabularnewline
4 & -0,423 & 1,351 & 1,87E+10 & -0,057 & -6,543 & -7,460 & 6934,26\tabularnewline
5 & -0,441 & 2,195 & 1,23E+10 & -0,006 & -6,520 & -7,217 & 7603,78\tabularnewline
6 & -0,460 & 1,205 & 2,63E+09 & -0,298 & -8,039 & -8,287 & 16860,11\tabularnewline
7 & -0,486 & 1,097 & 6,78E+08 & -0,276 & -7,313 & -7,828 & 8921,12\tabularnewline
8 & -0,538 & 1,769 & 1,78E+08 & -0,360 & -7,613 & -7,444 & 10997,30\tabularnewline
9 & -0,612 & 3,923 & 3,86E+07 & -0,257 & -6,723 & -6,141 & 6503,94\tabularnewline
10 & -0,763 & 3,916 & 1,66E+06 & -0,371 & -6,113 & -5,461 & 3276,98\tabularnewline
\hline 
original & -0,448 & 1,382 & 5,78E+09 & -0,197 & -7,325 & -7,754 & 10820,37\tabularnewline
\hline
\multicolumn{1}{c}{} &  & \multicolumn{1}{c}{} & \multicolumn{1}{c}{} &  &  & \multicolumn{1}{c}{} & \multicolumn{1}{r}{}\tabularnewline
\end{tabular}
\par\end{centering}

\centering{}\caption{Parameter estimates and the risk capital from the Pareto distribution
and the FTG distribution for 10 bootstrap samples and the original
data set.\label{ex2_capreg }}
\end{table}

Risk capital\emph{ }has been calculated as $0.999$ quantile of the
aggregate losses, computed from (\ref{eq:ald}), by simulating $10^{5}$
times $N$  loss events, where $N$ is Poisson distributed with parameter
$\lambda=20$ and the loss events, $L_{i}$, are simulated from the
fitted Pareto and FTG distributions. Using the FTG distribution the
risk capital is $10820.4$, using Pareto distribution is $5.78\times10^{9}$.
If our data were in thousands of dollars (probably is greater) the
Pareto estimation of risk capital for a bank is about the same order
as the USA gross domestic product (that is unrealistic), see the last
file in Table \ref{ex2_capreg }.

In order to see the sample dependence of the risk capital estimate,
we generated several \emph{bootstrap} samples of the same size as
the original data set. It is observed immediately, with a small number
of samples, the instability of the risk capital estimates obtained
with the Pareto distribution. However, the estimates obtained with
FTG distribution are much more stable.

Table \ref{ex2_capreg } reports the parameter estimates and the risk
capital from the Pareto distribution and the FTG distribution for
10 bootstrap samples and for the original data set. In all cases risk
capital\emph{ }has been calculated in the same way. Samples were selected
from 100 bootstrap samples, ordered by the parameter $\alpha$, choosing
one out of 10, for more diversity. Note that only sample-2 corresponds
to the truncated gamma distribution (\ref{trg}) and their behaviour
is not different from the rest. The most prominent fact is that, in
addition to the unrealistic risk capital estimation with Pareto distribution,
its estimation is highly unstable, with a factor of $10^{7}$. 

We must remember that just as the extreme levels of energy for the
tropical cyclones are affected by the limits of the Earth, the economy
is also finite. Hence, FTG distribution can be a valuable alternative
to Pareto distribution on operational risk.

\section{Bibliography}
\begin{enumerate}
\item Abramowitz, M. and Stegun, I. A. (1972). \textit{Handbook of Mathematical
Functions with Formulas, Graphs, and Mathematical Tables}. New York:
Dover.
\item Akinsete, A., Famoye, F and Lee, C. (2008) The beta-Pareto distribution.
\textit{Statistics}, 42, 547 - 563.
\item Arnold, B. C. (1983). \textit{Pareto Distributions}. Fairland, Maryland:
Interna- tional Cooperative Publishing House.
\item Balkema, A., and de Haan, L. (1974). Residual life time at great age.
\textit{Annals of Probability}, 2, 792\textendash{}804. 
\item Barndorff-Nielsen, O. and Cox, D. (1994) \textit{Inference and asymptotics.}
Monographs on Statistics and Applied Probability, 52. Chapman \& Hall,
London.
\item Barndorff-Nielsen, O. (1978). \textit{Information and exponential
families in statistical theory.} Wiley Series in Probability and Mathematical
Statistics. Chichester: John Wiley \& Sons.
\item Brown, L. (1986). \textit{Fundamentals of statistical exponential
families with applications in statistical decision theory.} Lecture
Notes Monograph Series, 9. Hayward, CA: Institute of Mathematical
Statistics. 
\item Chapman, D. G. (1956). Estimating the Parameters of a Truncated Gamma
Distribution.\textit{ Ann. Math. Statist.}, 27, 498-506.
\item Choulakian, V., and Stephens, M. A. (2001). Goodness-of-Fit for the
Generalized Pareto Distribution. \emph{Technometrics}, 43, 478 - 484.
\item Clauset, A., Shalizi, C. R. and Newman, M. E. J. (2009). Power- law
Distributions in Empirical Data. \textit{SIAM Review}, 51, 661 - 703.
\item Coles. S. and Sparks (2006). Extreme value methods for modelling historical
series of large volcanic magnitudes. \textit{Statistics in Volcanology},
Spec. Publ. of the Int. Assoc. of Volcanol. and Chem. of the Earths
Inter. Ch. 5.
\item Corral, A., Osso, A. and Llebot, J.E. (2010). Scaling of tropical-cyclone
dissipation. \textit{Nature Physics}, 6, 693 - 696.
\item Davis, H. T. and Michael L. F. (1979). The Generalized Pareto Law
as a Model for Progressively Censored Survival Data. \textit{Biometrika},
66, 299 - 306.
\item Degen, M., Embrechts, P. and Lambrigger, D. (2007). The quantitative
modeling of operational risk: between g-and-h and EVT. \textit{Astin
Bulletin}, 37, 265-291.
\item Den Broeder, G. G. (1955) On parameter estimation for truncated Pearson
type III distributions. \textit{Ann. Math. Statist,} 26, 659 - 663. 
\item Devroye, L. (1986). \emph{Non-Uniform Random Variate Generation}.
Springer-Verlag, New York.
\item Dixit, U. J. and Phal, K. D. (2005). Estimating scale parameter of
a truncated gamma distribution. \textit{Soochwon Journal of Mathematics,}
31, 515-523.
\item Dutta, K. and Perry, J. (2006). \textit{A Tale of Tails: An Empirical
Analysis of Loss Distribution Models for Estimating Operational Risk
Capital}. Federal Reserve Bank of Boston. Working Paper 06-13. 
\item Embrechts, P. Klüppelberg, C. and Mikosch, T. (1997). \textit{Modelling
Extremal Events for Insurance and Finance}. Springer-Verlag, Berlin. 
\item Fisher, R. A. (1922). On the mathematical fundations of theoretical
statistics. \textit{Philos. Trans. Roy. Soc. London}, Ser. A, 222,
309 - 368.
\item Furlan, C. (2010). Extreme value methods for modelling historical
series of large volcanic magnitudes. \textit{Statistical Modelling},
10, 113 - 132.
\item Harter, H. L. (1967). Maximum-likelihood estimation of the parameters
of a four-parameter generalized gamma populaton from complete and
censored samples. \textit{Technometrics}, 9, 159 - 165. 
\item Hegde, L. M. and Dahiya, R. C. (1989). Estimation of the parameters
of a truncated gamma distribution.\textit{ Communications in Statistics:
Theory and Methods}, 18, 561 - 577.
\item Letac, G. (1992).\textit{ Lectures on natural exponential families
and their variance functions.} Monografi{}as de Matemática, 50, IMPA,
Rio de Janeiro.
\item McNeil, A. J., Frey, R. and Embrechts P. (2005). \textit{Quantitative
Risk Management: Concepts, Techniques and Tools}. Princeton University
Press.
\item Moscadelli, M. (2004). \textit{The modelling of operational risk:
experience with the analysis of the data collected by the Basel Committee.}
Economic working papers, 517, Bank of Italy, Economic Research Department.
\item Philippe, A. (1997). Simulation of right and left truncated gamma
distributions by mixtures. \textit{Statistics and Computing}, 7, 173
- 181.
\item Pickands, J. (1975). Statistical inference using extreme order statistics.
\textit{Annals of Statistics}, 3, 119\textendash{}131.
\item R Development Core Team (2010). R: \textit{A Language and Environment
for Statistical Computing.} R Foundation for Statistical Computing,
Vienna, Austria. 
\item Rachev, S. T., Racheva-Iotova, B., Stoyanov, S. (2010). Capturing
fat tails, in Risk. Risk Management, Derivatives and Regulation, May
2010, 72-77.
\item Sornette, D. (2006). \textit{Critical phenomena in natural sciences.
}Springer Berlin Heidelberg New York.
\item Stacy, E. W. (1962). A generalization of the gamma distribution.\textit{
Ann. Math. Stat.}, 33, 1187 - 1192.
\end{enumerate}

\end{document}